\documentclass[10pt,fleqn]{article}
\usepackage{amssymb,amsmath,amsthm,mathrsfs,setspace}

\let\b=\beta

\let\d=\delta

\let\e=\varepsilon

\let\F=\Phi

\let\i=\iota

\let\r=\rho

\let\t=\tau

\let\vp=\varpi

\def\cR{{\cal R}}

\def\cV{{\cal V}}

\def\Lap{\Delta}

\def\so{\operatorname{SO}}

\def\sideremark#1{\ifvmode\leavevmode\fi\vadjust{\vbox to0pt{\vss% the remark
 \hbox to 0pt{\hskip\hsize\hskip1em%                          will appear only
 \vbox{\hsize2cm\tiny\raggedright\pretolerance10000%          on the side
 \noindent #1\hfill}\hss}\vbox to8pt{\vfil}\vss}}}%
                                                   %          in 2cm
                                                   %          wide box
                                                   %

\def\Lap{\triangle}

\newtheorem{theorem}{Theorem}[section]
\newtheorem{lemma}[theorem]{Lemma}
\newtheorem{remark}[theorem]{Remark}

\title{Intertwinors on Differential Forms over the Product of Spheres}
\author{Doojin Hong}
\date{\today}

%\markright{}
\markboth{\sf Intertwinors}{\sf Intertwinors}

\begin{document}

\maketitle

\begin{abstract}
We give explicit formulas for the intertwinors on the differential form bundles over $S^{p-1} \times S^{q-1}$ with the standard pseudo-Riemannian metric $g=-g_{{}_{S^{p-1}}}+g_{{}_{S^{q-1}}}$ of signature $(p-1,q-1)$. As a special case, we construct conformally invariant differential operators of all even orders. 
\end{abstract}
%%%%%%%%%%%%%%%%%%%%%%%%%%%%%%%%%%%%%%%%%%%%%%%%
\section{Introduction}
%%%%%%%%%%%%%%%%%%%%%%%%%%%%%%%%%%%%%%%%%%%%%%%%
Spectrum generating technique introduced by Branson, \'Olafsson, and {\O}rsted in \cite{BOO:96} has been successfully applied to multiplicity one cases and multiplicity two cases (\cite{BH:06}, \cite{BOO:96}, \cite{Hong:11}). 
In this paper, we apply the technique to differential form bundles over the product of spheres and present the spectral function for the intertwinors and construct explicitly conformally invariant differential operators of all even orders. This is a multiplicity two case and a generalization of Branson's work on $S^1\times S^{n-1}$, $n$ even in \cite{Branson:87}.
%%%%%%%%%%%%%%%%%%%%%%%%%%%%%%%%%%%%%%%%%%%%%%%%%%%%%%%
\section{Intertwinors on the differential form bundles}
%%%%%%%%%%%%%%%%%%%%%%%%%%%%%%%%%%%%%%%%%%%%%%%%%%%%%%%
Let $(\xi_{p-1},\cdots,\xi_0,\xi_1,\cdots, \xi_q)$ be homogeneous coordinates of $S^{p-1} \times S^{q-1}$. Set $\xi_0=\cos\tau$ and complete $\tau$ to a set of spherical angular coordinates on $S^{p-1}$. Likewise, set $\xi_1=\cos\rho$ and complete it to a shperical angular coordinates on 
$S^{q-1}$. Then we get a conformal vector field $T$ and its conformal factor $\vp$.
\begin{equation*}
T:=\cos\rho\sin \tau\partial_\tau+\cos \tau\sin\rho\partial_\rho \mbox{ and }\vp:=\cos \tau\cos\rho\, 
\end{equation*}
satisfying ${\mathcal L}_T=2\vp g$, where ${\mathcal L}_T$ is the Lie derivative with respect to $T$ and $g=-g_{{}_{S^{p-1}}}+g_{{}_{S^{q-1}}}$ is the standard pseudo-Riemannian metric on $S^{p-1} \times S^{q-1}$. Note that $w_0:=\cos\tau$ and $w_1:=\cos\rho$ are conformal factors corresponding to the conformal vector fields $Y_0:=\sin\tau\partial\tau$ and $Y_1:=\sin\rho\partial\rho$ on $S^{p-1}$ and $S^{q-1}$, respectively. 

Let $A$ be an intertwinor of order $2r$ on $k$-forms, that is, an operator satisfying the following relation (\cite{BH:07})
\begin{equation*}
A\left(\widetilde{\mathcal L}_T+\left(\frac{n}{2}-r\right)\vp\right)=\left(\widetilde{\mathcal L}_T-\left(\frac{n}{2}-r\right)\vp\right)A\, ,
\end{equation*}
where $\widetilde{\mathcal L}_T={\mathcal L}_T-k\vp$ is the reduced Lie derivative  (\cite{Kosmann:72}).

Note that for $\Psi \in \Lambda^k (S^{p-1} \times S^{q-1})$,
\begin{equation*}
({\mathcal L}_T-\nabla_T)\Psi=k\vp\Psi-\sin\r\sin \t(\e^0\i^1-\e^1\i^0)\Psi\,,
\end{equation*}
where $\e^0=\e(d\t), \e^1=\e(d\r)$, and $\i^0=\i(d\t), \i^1=\i(d\r)$ are exterior multiplications and interior multiplications, respectively. 

For $\F'\wedge \F\in\Lambda^a S^{p-1}\wedge \Lambda^{k-a} S^{q-1}$,
\begin{align*}
P(\F'\wedge \F):=&\sin\r\sin \t(\e^0\i^1-\e^1\i^0)(\F'\wedge \F)\\
=&(-1)^{a-1}\left\{[d',w_0]\F'\wedge [\d,w_1]\F+[\d',w_0]\F'\wedge [d,w_1]\F\right\}\, ,
\end{align*}
where $d'$ and $\d'$ (resp. $d$ and $\d$) are exterior derivative and coderivative
on $(S^{p-1},-g_{{}_{S^{p-1}}})$ (resp.$(S^{q-1},g_{{}_{S^{q-1}}})$) and $[,]$ is the ususal commutator relation.

Consider the standard {\it Riemannian} metric $g=g_{{}_{S^{p-1}}}+g_{{}_{S^{q-1}}}$ on $S^{p-1}\times S^{q-1}$ and {\it Riemannian} Bochner Laplacian 
$N:=-g^{\alpha\beta}\nabla_\alpha\nabla_\beta$. Then,  
\begin{equation}\label{BL}
[N,\varpi]=2\left(\nabla_T+\frac{n}2\varpi\right)
\end{equation}
on tensors of any type. Thus the intertwining relation becomes
\begin{equation}\label{int}
A\left(\frac{1}{2}[N,\vp]-P-r\vp\right)=\left(\frac{1}{2}[N,\vp]-P+r\vp\right)A\, .
\end{equation}

On $S^{q-1}$, the $a$-form $\Lap$-spectrum breaks up into a $\d d$-spectrum and a $d\d$-spectrum (where $\Lap=\d d+d\d$). Let $E_{a,d,j}$ (resp. $E_{a,\d,j}$) be the space of spherical harmonic $a$-forms of degree $j$ of the Hodge summand $\cR(d)$ (resp. $\cR(\d)$) on $(S^{q-1},g_{{}_{S^{q-1}}})$. And let $E'_{a,d',j'}$ and $E'_{a,\d',j'}$ be the corresponding objects on $(S^{p-1},g_{{}_{S^{p-1}}})$. 
Then the $k$-forms on $S^{p-1} \times S^{q-1}$ break up into irreducible $K=\so(p)\times \so(q)$-modules
\begin{align*}
&E'_{k-a,\d',j'}\wedge E_{a,\d,j}\mbox{ and } E'_{k-a,d',j'}\wedge E_{a,d,j},
\mbox{ multiplicity 1 types and}\\
&E'_{k-a,\d',j'}\wedge E_{a,d,j}\oplus E'_{k-a+1,d',j'}\wedge E_{a-1,\d,j}, 
\mbox{ multiplicity 2 type}.
\end{align*}

The conformal factor $\vp$ maps an irreducible $K$-module to a direct sum of irreducible $K$-modules by the selection rule (\cite{Branson:92}). We want to consider compressed intertwining relations on multiplicity 1 types and on multiplicity 2 types, respectively. 

$\bullet$ {\bf Multiplicity 1 type}\\
Given $E'_{k-a,\d',j'}\wedge E_{a,\d,j}$-type, consider projections as follows.
\begin{equation*}
\begin{array}{rcl}
E'_{k-a,\d',j'-1}\wedge E_{a,\d,j+1}&&E'_{k-a,\d',j'+1}\wedge E_{a,\d,j+1}\\
\nwarrow&&\nearrow\\
&E'_{k-a,\d',j'}\wedge E_{a,\d,j}&\\
\swarrow&&\searrow\\
E'_{k-a,\d',j'-1}\wedge E_{a,\d,j-1}&&E'_{k-a,\d',j'+1}\wedge E_{a,\d,j-1}
\end{array}
\end{equation*}

Note first that here $P\equiv 0$. Let $J'=j'+\dfrac{p-2}{2}$ and $J=j+\dfrac{q-2}{2}$. We consider the quotient of eigenvalues of the operator $A$. That is, the eigenvalue on one of the four target types over the eigenvalue on the source type in the above diagram.
Then, with respect to the above diagram, the transition quantities (the eigenvalue quotients) are
\begin{equation*}
\begin{array}{rcl}
\dfrac{-J'+J+1+r}{-J'+J+1-r}&&\dfrac{J'+J+1+r}{J'+J+1-r}\\
\nwarrow&&\nearrow\\
&\bullet&\\
\swarrow&&\searrow\\
\dfrac{-J'-J+1+r}{-J'-J+1-r}&&\dfrac{J'-J+1+r}{J'-J+1-r}
\end{array}
\end{equation*}
The above data can be organized in terms of the gamma function $\Gamma$ as follows.
\begin{equation}\label{m1}
\frac{\Gamma\left(\dfrac{J'+J+1+r}{2}\right)\Gamma\left(\dfrac{J'-J+1+r}{2}\right)}{
\Gamma\left(\dfrac{J'+J+1-r}{2}\right)\Gamma\left(\dfrac{J'-J+1-r}{2}\right)}\, .
\end{equation}
On the other multiplicity one type, $E'_{k-a,d',j'}\wedge E_{a,d,j}$, we have exactly the same transition quantities.

Projection from $E'_{k-a,\d',j'}\wedge E_{a,\d,j}$ to $E'_{k-a,d',j'}\wedge E_{a,d,j}$ gives the transition quantity 
\begin{equation}\label{s}
\dfrac{s-r}{s+r},\quad \mbox{where }s=(p+q-2-2k)/2.
\end{equation}
$\bullet$ {\bf Multiplicity 2 type}\\
Let $\cV(j',j)=E'_{k-a,\d',j'}\wedge E_{a,d,j}\oplus E'_{k-a+1,d',j'}\wedge E_{a-1,\d,j}$ and consider projections to the neighboring multiplicity 2 types.  
\begin{equation}\label{m2m2}
\begin{array}{rcl}
\cV(j'-1,j+1)&&\cV(j'+1,j+1)\\
\nwarrow&&\nearrow\\
&\cV(j',j)&\\
\swarrow&&\searrow\\
\cV(j'-1,j-1)&&\cV(j'+1,j-1)
\end{array}
\end{equation}

For $\d'\theta\wedge d\tau+d'\eta\wedge\d\zeta \in \cV(j',j)$, we view the operator $A$ as a $2\times 2$ matrix as follows.
\begin{equation*}
A(\d'\theta\wedge d\tau+d'\eta\wedge\d\zeta)
=A_{11}\d'\theta\wedge d\tau+A_{12}\d'd'\eta\wedge d\d\zeta+A_{21}\theta\wedge \tau+A_{22}d'\eta\wedge\d\zeta \, .
\end{equation*}
%%%%%%%%%%%%%%%%%%%%%%%%%%%%%%%%%%
The following lemma shown by Branson in \cite{Branson:87} is crucial for the multiplicity 2 type case.
%%%%%%%%%%%%%%%%%%%%%%%%%%%%%%%%%%
\begin{lemma}
\label{consts}
Let $w$ be a proper conformal vector field on $S^n$ with the standard Riemannian metric. Let $\varphi\in E_{k,\d,j}$ and $\psi\in E_{k,d,j}$. And let $w_{pr}^\pm$ be the projection of $w$ onto ${j\pm 1}$ types, respectively. Then,
\begin{equation*}\begin{array}{ll}
dw_{pr}^+\varphi=\dfrac{\mu+1}{\mu} w_{pr}^+d\varphi\, ,& 
dw_{pr}^-\varphi=\dfrac{\nu-1}{\nu} w_{pr}^-d\varphi\, ,\\
\d w_{pr}^+\psi=\dfrac{\beta+1}{\beta} w_{pr}^+\d\psi\, ,& 
\d w_{pr}^-\psi=\dfrac{\alpha-1}{\alpha} w_{pr}^+\d\psi\, ,
\end{array}
\end{equation*}
where
\begin{equation*}
\mu=j+k,\quad \nu=n-1-k+j,\quad \alpha=j-1+k,\quad \beta=n-k+j\, .
\end{equation*}
\qed
\end{lemma}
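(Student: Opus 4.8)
The plan is to work on a single round sphere $S^n$ with a proper conformal vector field $w$, whose defining feature is that $\mathcal{L}_w g = 2\phi\, g$ for the associated conformal factor $\phi$, and to exploit the basic commutator identity $[N,\phi] = 2(\nabla_w + \tfrac{n}{2}\phi)$ on tensors (the analogue of \eqref{BL} in the single-sphere setting). First I would recall that on $S^n$ the conformal factor $\phi$ is a degree-one spherical harmonic (an $\ell=1$ eigenfunction of the scalar Laplacian), so by the selection rule multiplication by $\phi$ sends $E_{k,\d,j}$ into a sum of pieces living in degrees $j-1$, $j$, $j+1$; the notation $w_{pr}^\pm$ picks out the composition of "multiply by (the vector field dual of) $w$ and project to the $j\pm 1$ harmonic space." Since $d$ and $\d$ commute with the Laplacian and preserve the Hodge type, $d\varphi \in E_{k+1,\d,j}$ lies again in a single degree-$j$ eigenspace, so both sides of each claimed identity land in the same irreducible $K$-type and it suffices to compare scalars.

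The key computation is to apply $N$ (the Bochner/form Laplacian, or rather the Hodge Laplacian $\Delta$, which on $S^n$ differs from $N$ by a curvature constant that is harmless here) to $\phi\varphi$ and use $[N,\phi]\varphi = 2\nabla_w\varphi + n\phi\varphi$. Expanding $N(\phi\varphi)$ via the Hodge-type eigenvalues on $E_{k+1,\d,j}$ versus $E_{k,\d,j}$ — which are explicit quadratic functions of $j$, namely the numbers encoded through $\mu=j+k$ etc. — and then projecting onto the degree-$(j+1)$ and degree-$(j-1)$ summands pins down how $d$ interacts with $w_{pr}^\pm$. Concretely, I would write $w\cdot(\cdot)$ (contraction of $w$ into, or the relevant first-order operator built from $w$) and compare $d(w_{pr}^\pm\varphi)$ with $w_{pr}^\pm(d\varphi)$ by computing both through the eigenvalue bookkeeping: each is a degree-$(j\pm1)$ co-closed $(k+1)$-form, and the ratio of the scalars is forced to be $\tfrac{\mu+1}{\mu}$ (resp. $\tfrac{\nu-1}{\nu}$) by matching Casimir/Laplace eigenvalues across the shift. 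The $\d$-on-$\psi$ statements follow by the identical argument with $d$ replaced by $\d$ and the degree shifts running through the $\cR(d)$ summand instead, giving $\alpha$ and $\beta$ in place of $\nu$ and $\mu$; alternatively one obtains them from the first pair by Hodge duality $\star$, which interchanges $\cR(d)$ and $\cR(\d)$ and conjugates $d$ to $\pm\d$.

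The main obstacle I anticipate is bookkeeping the exact eigenvalue constants: one must be careful that the "degree $j$" normalization for co-closed $k$-forms in $\cR(\d)$ matches the Laplace eigenvalue $(j+k)(j+n-1-k)$ (and the analogous formula for the $\cR(d)$ side), and that the projections $w_{pr}^\pm$ are normalized consistently on the source space $E_{k,\d,j}$ and the target space $E_{k+1,\d,j}$ so that the commutator ratios come out as the clean expressions $\tfrac{\mu\pm1}{\mu}$ rather than those expressions times some $j$-dependent fudge factor. Once the normalization is fixed, the computation is a finite linear-algebra check inside a two- or three-dimensional space of $K$-isotypic components, and the identities drop out; I would present this as: (i) reduce to scalars on irreducibles, (ii) compute $N(\phi\varphi)$ two ways, (iii) project and read off the ratios, (iv) deduce the $\d$ statements by duality. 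Since the statement is quoted from \cite{Branson:87}, a full re-derivation is not strictly needed, but the sketch above is the route I would take to verify it.
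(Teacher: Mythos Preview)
The paper does not prove this lemma at all: it is stated with an immediate \qedsymbol\ and attributed to Branson \cite{Branson:87}, so there is no in-paper argument to compare your proposal against. Your sketch --- reduce to scalars on irreducible $K$-types, use that the conformal factor is a degree-one spherical harmonic together with the commutator $[N,\phi]=2(\nabla_w+\tfrac{n}{2}\phi)$, and read off the ratios from the known $\Delta$-eigenvalues $(j+k)(j+n-1-k)$ on $E_{k,\d,j}$ --- is indeed the standard route (and essentially how Branson does it in \cite{Branson:87}); you already note correctly that a full re-derivation is not required here.

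One small clarification worth making if you flesh this out: in the lemma the symbol $w$ is overloaded --- the hypothesis names a conformal \emph{vector field}, but in the formulas $w_{pr}^\pm\varphi$ the object acting is the associated conformal \emph{factor} (a scalar function, a degree-one spherical harmonic), consistent with the later usage $w_{0,pr}\d'\theta$ where $w_0=\cos\tau$. Your parenthetical ``multiply by (the vector field dual of) $w$'' shows you have the right interpretation, but stating it plainly would remove any ambiguity.
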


Let $\varpi_{pr}$ be the projection of $\varpi$ from $\cV(j',j)$ to one of its neighbors in the diagram (\ref{m2m2}) and $w_{0,pr}$ and $w_{1,pr}$ be the corresponding projections on $S^{p-1}$ and $S^{q-1}$. Then,
\begin{align*}
&\varpi_{pr}\d'\theta\wedge d\tau=w_{0,pr}\d'\theta\wedge w_{1,pr}d\tau=b\cdot\d'w_{0,pr}\theta\wedge dw_{1,pr}\tau ,\\
&\varpi_{pr}d'\eta\wedge\d\zeta=w_{0,pr}d'\eta\wedge w_{1,pr}\d\zeta, \text{ and}  \\ 
&\d'w_{0,pr}d'\eta\wedge d w_{1,pr}\d\zeta=b^{-1}\cdot\varpi_{pr}\d'd'\eta\wedge d\d\zeta,
\end{align*}
where $b$ is a constant to be determined by the lemma \ref{consts}. 

Thus,
\begin{equation*}
\begin{split}
\varpi_{pr} A\left[\begin{array}{c}\d'\theta\wedge d\tau\\d'\eta\wedge\d\zeta\end{array}\right]&=\left[\begin{array}{cc}A_{11}\,\varpi_{pr}\d'\theta\wedge d\tau & b\cdot A_{12}\cdot b^{-1}\,\varpi_{pr}\d'd'\eta\wedge d\d\zeta\\
b^{-1}\cdot A_{21}\cdot b\,\varpi_{pr}\theta\wedge\tau & A_{22}\,\varpi_{pr}d'\eta\wedge\d\zeta
\end{array}\right]\\
&=\left[\begin{array}{cc}A_{11} & b\cdot A_{12}\\b^{-1}\cdot A_{21} & A_{22}\end{array}\right]
\varpi_{pr}\left[\begin{array}{c}\d'\theta\wedge d\tau\\d'\eta\wedge\d\zeta\end{array}\right]\\
&=:A'\varpi_{pr}\left[\begin{array}{c}\d'\theta\wedge d\tau\\d'\eta\wedge\d\zeta\end{array}\right]\, .
\end{split}
\end{equation*}

Next, consider the projection $P_{pr}$ of the operator $P$ from $\cV(j',j)$ to one of its neighbors in the diagram (\ref{m2m2}). 
\begin{equation*}
\begin{split}
P_{pr}\d'\theta\wedge d\tau&=(-1)^{k-a-1}[d',w_{0,pr}]\d'\theta\wedge [\d,w_{1,pr}]d\tau \\
&=(-1)^{k-a-1}\{d'(w_{0,pr}\d'\theta)-w_{0,pr}d'\d'\theta\}\wedge \{\d(w_{1,pr}d\tau)-w_{1,pr}\d d\tau\}\\
&=(-1)^{k-a-1}(\lambda_1-1)w_{0,pr}d'\d'\theta\wedge(\lambda_2-1)w_{1,pr}\d d\tau \\
&=(-1)^{k-a-1}(\lambda_1-1)l_1w_{0,pr}\theta\wedge(\lambda_2-1)l_2w_{1,pr}\tau\, ,
\end{split}
\end{equation*}
where constants $\lambda_1$, $\lambda_2$, $r_1$, $r_2$ are determined by the lemma~\ref{consts} and $l_1=-\alpha\beta$ at $E'_{k-a+1,d',j'}$ on $S^{p-1}$ and $l_2=\mu\nu$ at $E_{a-1,\d,j}$ on $S^{q-1}$ with the standard Riemannian metrics on both spaces. 

On the other hand,
\begin{equation*}
\begin{split}
P_{pr}d'\eta\wedge\d\zeta&=(-1)^{k-a}[\d',w_{0,pr}]d'\eta\wedge [d,w_{1,pr}]\d\zeta \\
&=(-1)^{k-a}\{\d'(w_{0,pr}d'\eta)-w_{0,pr}\d'd'\eta\}\wedge\{d(w_{1,pr}\d\zeta)-w_{1,pr}d\d\zeta\}\\
&=(-1)^{k-a}(1-r_1)\d'(w_{0,pr}d'\eta)\wedge(1-r_2)d(w_{1,pr}\d\zeta) \, .
\end{split}
\end{equation*}

Since $w_{0,pr}\d'\theta\wedge w_{1,pr}d\tau=b\cdot\d'w_{0,pr}\theta\wedge dw_{1,pr}\tau$, $P_{pr}\left[\begin{array}{c}\d'\theta\wedge d\tau\\d'\eta\wedge\d\zeta\end{array}\right]$ equals $(-1)^{k-a-1}$ times
\begin{equation*}
\left[\begin{array}{cc}0 & -(1-r_1)(1-r_2)\\(\lambda_1-1)(\lambda_2-1)l_1l_2b^{-1} & 0\end{array}\right]
\varpi_{pr}\left[\begin{array}{c}\d'\theta\wedge d\tau\\d'\eta\wedge\d\zeta\end{array}\right] .
\end{equation*}

Then, the compressed intertwining relation between $\cV(j',j)$ and one of the neighbors in the diagram (\ref{m2m2}) becomes
\begin{equation*}
AB^-\varpi_{pr}\left[\begin{array}{c}\d'\theta\wedge d\tau\\d'\eta\wedge\d\zeta\end{array}\right]=B^+A'\varpi_{pr}\left[\begin{array}{c}\d'\theta\wedge d\tau\\d'\eta\wedge\d\zeta\end{array}\right]\, ,
\end{equation*}
where
\begin{equation*}
\begin{split}
&B^-:=\left[\begin{array}{cc}\frac{1}{2}N_c-r & (-1)^{k-a-1}(1-r_1)(1-r_2)\\
(-1)^{k-a}(\lambda_1-1)(\lambda_2-1)l_1l_2b^{-1} & \frac{1}{2}N_c-r \end{array}\right] ,\\
&B^+:=\left[\begin{array}{cc}\frac{1}{2}N_c+r & (-1)^{k-a-1}(1-r_1)(1-r_2)\\
(-1)^{k-a}(\lambda_1-1)(\lambda_2-1)l_1l_2b^{-1} & \frac{1}{2}N_c+r \end{array}\right],\, \text{ and }\\
&N_c=N \text{ in }(\ref{BL})\text{ at the target space}-N \text{ at }\cV(j',j). 
\end{split}
\end{equation*}
Therefore,
\begin{equation*}
\frac{\text{det} A\text{ at a neighbor}}{\text{det} A\text{ at }\cV(j',j)}=\frac{\text{det} A}{\text{det} A'}=\frac{\text{det} B^+}{\text{det} B^-}\, .
\end{equation*}

The transition quantities are 
\begin{equation*}
\begin{array}{rcl}
\dfrac{(-J'+J+r)(-J'+J+2+r)}{(-J'+J-r)(-J'+J+2-r)}&&\dfrac{(J'+J+r)(J'+J+2+r)}{(J'+J-r)(J'+J+2-r)}\\
\nwarrow&&\nearrow\\
&\bullet&\\
\swarrow&&\searrow\\
\dfrac{(-J'-J+r)(-J'-J+2+r)}{(-J'-J-r)(-J'-J+2-r)}&&\dfrac{(J'-J+r)(J'-J+2+r)}{(J'-J-r)(J'-J+2-r)}
\end{array}
\end{equation*}
The corresponding gamma function expression is
\begin{equation}\label{m2}
\frac{\Gamma\left(\dfrac{J'+J+r}{2}\right)\Gamma\left(\dfrac{J'+J+2+r}{2}\right)\Gamma\left(\dfrac{J'-J+r}{2}\right)\Gamma\left(\dfrac{J'-J+2+r}{2}\right)}
{\Gamma\left(\dfrac{J'+J-r}{2}\right)\Gamma\left(\dfrac{J'+J+2-r}{2}\right)\Gamma\left(\dfrac{J'-J-r}{2}\right)\Gamma\left(\dfrac{J'-J+2-r}{2}\right)}\, .
\end{equation}
$\bullet$ {\bf Interface between multiplicity 1 and 2 types}\\
Now we want to consider a relationship between multiplicity one type and multiplicity two type. Let $\d'\theta\wedge\d\tau\in E'_{k-a+1,\d',j'}\wedge E_{a-1,\d,j+1}$. The projection of the right hand side of the intertwining relation 
(\ref{int}) onto $E'_{k-a,\d',j'}\wedge E_{a,d,j}$ is 
\begin{equation*}
(-1)^{k-a+1}(1-c_1)t_1\cdot \d' w_{0,pr}\d'\theta\wedge d w_{1,pr}\d\tau\, ,
\end{equation*}
where
\begin{equation*}
\begin{split}
&w_{0,pr} \text{ is the projection to }E'_{k-a+1,d',j'},\, 
w_{1,pr} \text{ is the projection to }E_{a-1,\d,j},\\
&t_1 \text{ is the eigenvalue of }A\text{ on }E'_{k-a+1,\d',j'}\wedge E_{a-1,\d,j+1},\, \text{ and }\\
&w_{1,pr}d\d\tau=c_1dw_{1,pr}\d\tau \text{ for some constant }c_1\text{ by the Lemma \ref{consts}}\, .
\end{split}
\end{equation*}
On the other hand, the projection of the left hand side of 
(\ref{int}) is
\begin{equation*}
(-1)^{k-a+1}(1-c_1)A_{11}+\left(\frac{1}{2}N_1-r\right)A_{12}\cdot \d' w_{0,pr}\d'\theta\wedge d w_{1,pr}\d\tau\, ,
\end{equation*}
where
\begin{equation*}
\begin{split}
&N_1=N \text{ on }E'_{k-a+1,d',j'}\wedge E_{a-1,\d,j}
-N \text{ on }E'_{k-a+1,\d',j'}\wedge E_{a-1,\d,j+1},\\
&A_{11},A_{12} \text{ are entries of }A\text{ on }
E'_{k-a,\d',j'}\wedge E_{a,d,j}\oplus E'_{k-a+1,d',j'}\wedge E_{a-1,\d,j}\, .
\end{split}
\end{equation*}
So we have
\begin{equation}
(-1)^{k-a+1}(1-c_1)A_{11}+\left(\frac{1}{2}N_1-r\right)A_{12}
=(-1)^{k-a+1}(1-c_1)t_1\, .
\end{equation}
The projection from $E'_{k-a+1,\d',j'}\wedge E_{a-1,\d,j+1}$ onto $E'_{k-a+1,d',j'}\wedge E_{a-1,\d,j}$ yields
\begin{equation}
(-1)^{k-a+1}(1-c_1)A_{21}+\left(\frac{1}{2}N_1-r\right)A_{22}=\left(\frac{1}{2}N_1+r\right)t_1\, .
\end{equation}
Let $d'\eta\wedge d\zeta\in E'_{k-a,d',j'}\wedge E_{a,d,j+1}$. Then the projections of $d'\eta\wedge d\zeta$ onto $E'_{k-a+1,d',j'}\wedge E_{a-1,\d,j}$ and $E'_{k-a,\d',j'}\wedge E_{a,d,j}$, respectively yield
\begin{align}
&\left(\frac{1}{2}N_2-r\right)\frac{1}{m_1m_2}A_{21}+(-1)^{k-a}(1-c_2)A_{22}
=(-1)^{k-a}(1-c_2)t_2,\\
&\left(\frac{1}{2}N_2-r\right)A_{11}+(-1)^{k-a}(1-c_2)m_1m_2A_{12}
=\left(\frac{1}{2}N_2+r\right)t_2,
\end{align}
where
\begin{equation*}
\begin{split}
&N_2=N \text{ on }E'_{k-a,\d',j'}\wedge E_{a,d,j}
-N \text{ on }E'_{k-a,d',j'}\wedge E_{a,d,j+1},\\
&A_{21},A_{22} \text{ are entries of }A\text{ on }
E'_{k-a,\d',j'}\wedge E_{a,d,j}\oplus E'_{k-a+1,d',j'}\wedge E_{a-1,\d,j},\\
&t_2 \text{ is the eigenvalue of }A\text{ on }E'_{k-a,d',j'}\wedge E_{a,d,j+1},\\
&m_1 \text{ is }(-1)\cdot \d'd' \text{ eigenvalue on }E'_{k-a,\d',j'} \text{ in the Riemannian }S^{p-1},\\
&m_2 \text{ is }d\d \text{ eigenvalue on }E_{a,d,j} \text{ in }S^{q-1},\text{ and}\\
&w_{1,pr}\d d\zeta=c_2\d w_{1,pr}d\zeta. \text{ for some constant }c_2\text{ by the Lemma \ref{consts}}. \\
&\text{Here }w_{1,pr} \text{ is the projection to }E_{a,d,j}.
\end{split}
\end{equation*}
Between $E'_{k-a,\d',j'}\wedge E_{a,\d,j+1}$ and $E'_{k-a+1,\d',j'}\wedge E_{a-1,\d,j+1}$ the compressed intertwining relation shows that
\begin{multline*} 
\text{eigenvalue of }A\text{ on }E'_{k-a,\d',j'}\wedge E_{a,\d,j+1}\\
=\quad\text{eigenvalue of }A\text{ on }E'_{k-a+1,\d',j'}\wedge E_{a-1,\d,j+1}\, .
\end{multline*}
Thus, $\displaystyle{t_2=\frac{s-r}{s+r}\cdot t_1}$ by (\ref{s}) and we can write $A_{11}$, 
$A_{12}$, $A_{21}$, and $A_{22}$ in terms of $t_1$ using equations (3) through (6) as follows. 
\begin{equation}\label{m2-1}
\begin{array}{l}
A_{11}:\\
t\cdot\big((s+r)m_1+(s-r)m_2+(s+r)(s-r)\{(q-p)/2+k-2a+1-r\}\big),\\
A_{12}:\\
t\cdot(-1)^{k-a+1}\cdot 2r,\\
A_{21}:\\
t\cdot(-1)^{k-a+1}\cdot 2r\cdot m_1m_2,\\
A_{22}:\\
t\cdot\big((s-r)m_1+(s+r)m_2+(s+r)(s-r)\{(q-p)/2+k-2a+1+r\}\big),
\end{array}
\end{equation}
where 
\begin{equation*}
t=-\frac{t_1}{(J'+J+r)(J'-J-r)(s+r)}. 
\end{equation*}
In particular, we get 
\begin{equation*}
\text{det }A=\frac{J'+J-r}{J'+J+r}\cdot \frac{J'-J+r}{J'-J-r}\cdot \frac{s-r}{s+r}\cdot t_1^2\, .
\end{equation*}
From (\ref{m1}),(\ref{m2}), we get
\begin{equation*}
t_1^2=\frac{s+r}{s-r}\cdot
\left(\frac{\Gamma\left(\dfrac{J'+J+2+r}{2}\right)\Gamma\left(\dfrac{J'-J+r}{2}\right)}{
\Gamma\left(\dfrac{J'+J+2-r}{2}\right)\Gamma\left(\dfrac{J'-J-r}{2}\right)}\right)^2\, .
\end{equation*}
Therefore, we normalize the operator $A$ as follows.
\begin{theorem}\label{thm1}
The operator $A$ is 
\begin{align*}
\sqrt{\frac{s-r}{s+r}}\cdot\frac{\Gamma\left(\dfrac{J'+J+1+r}{2}\right)\Gamma\left(\dfrac{J'-J+1+r}{2}\right)}{
\Gamma\left(\dfrac{J'+J+1-r}{2}\right)\Gamma\left(\dfrac{J'-J+1-r}{2}\right)}
\text{ on }E_{k-a,d',j'}\wedge E_{a,d,j},\\
\sqrt{\frac{s+r}{s-r}}\cdot\frac{\Gamma\left(\dfrac{J'+J+1+r}{2}\right)\Gamma\left(\dfrac{J'-J+1+r}{2}\right)}{
\Gamma\left(\dfrac{J'+J+1-r}{2}\right)\Gamma\left(\dfrac{J'-J+1-r}{2}\right)}
\text{ on }E_{k-a,\d',j'}\wedge E_{a,\d,j}\, .
\end{align*}
And det $A$ is, on $E'_{k-a,\d',j'}\wedge E_{a,d,j}\oplus E'_{k-a+1,d',j'}\wedge E_{a-1,\d,j}$,
\begin{equation*}
\frac{\Gamma\left(\dfrac{J'+J+r}{2}\right)\Gamma\left(\dfrac{J'+J+2+r}{2}\right)\Gamma\left(\dfrac{J'-J+r}{2}\right)\Gamma\left(\dfrac{J'-J+2+r}{2}\right)}
{\Gamma\left(\dfrac{J'+J-r}{2}\right)\Gamma\left(\dfrac{J'+J+2-r}{2}\right)\Gamma\left(\dfrac{J'-J-r}{2}\right)\Gamma\left(\dfrac{J'-J+2-r}{2}\right)}\, .
\end{equation*}
\qed
\end{theorem}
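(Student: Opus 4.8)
The plan is to carry the Branson--\'Olafsson--{\O}rsted spectrum generating argument to its conclusion. The computations above have turned the intertwining relation (\ref{int}) into eigenvalue (resp.\ determinant) quotients along every edge of the graph of $K$-types joined by the conformal factor $\varpi$, so what remains is to solve those quotients, check that the resulting scalars are mutually consistent, and fix the single global normalization. I use throughout that an intertwinor $A$ exists a priori and, by Schur, acts as a scalar on each multiplicity one type and as a $2\times2$ matrix on each multiplicity two type, so that knowing the quotient on every edge of the (connected) $K$-type graph determines $A$ up to one constant.

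\emph{Multiplicity one types.} Here $P\equiv0$, so projecting (\ref{int}) from a type to a neighbor gives $A|_{\mathrm{target}}\,(\tfrac12 N_c-r)=A|_{\mathrm{source}}\,(\tfrac12 N_c+r)$, where $N_c$ is the jump of the Bochner--Laplacian eigenvalue. On $S^{q-1}$ the $a$-form Bochner spectrum equals $J^2$ plus a constant depending only on the form degree and Hodge type (and likewise on $S^{p-1}$ with $J'$), so those constants cancel in $N_c$, and the four moves $(j',j)\mapsto(j'\pm1,j\pm1)$ give $N_c=\pm2J'\pm2J+2$ with independent signs; substituting into the quotient reproduces exactly the four ratios in the diagram preceding (\ref{m1}). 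By $\Gamma(z+1)=z\,\Gamma(z)$ the function (\ref{m1}) has those four ratios as its increments in the four lattice directions, so $A$ acts on $E'_{k-a,\d',j'}\wedge E_{a,\d,j}$ by (\ref{m1}) times a constant; the projection computed in (\ref{s}) then ties the constant on the $d,d$ type to the one on the $\d,\d$ type, leaving a single overall constant.

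\emph{Multiplicity two types and normalization.} Taking determinants in $AB^-\varpi_{pr}=B^+A'\varpi_{pr}$ and using that $A'$ is similar to $A$ at $\cV(j',j)$, so $\det A'=\det A|_{\cV(j',j)}$, gives $\det A|_{\mathrm{neighbor}}/\det A|_{\cV(j',j)}=\det B^+/\det B^-$. In $\det B^\pm$ the two $(-1)^{k-a}$ signs combine to $+1$, so $\det B^\pm=(\tfrac12 N_c\pm r)^2+C$ with $C=(1-r_1)(1-r_2)(\lambda_1-1)(\lambda_2-1)\,l_1l_2\,b^{-1}$; substituting the Lemma \ref{consts} constants together with $l_1=-\alpha\beta$ and $l_2=\mu\nu$ should collapse $C$ to the direction-independent value $-1$, whence $\det B^\pm=(\tfrac12 N_c\pm r)^2-1$ factors into the transition quantities preceding (\ref{m2}) and $\det A$ on $\cV(j',j)$ agrees with (\ref{m2}) up to a constant. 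To fix that constant I would use the interface computation: solving the linear system (3)--(6) together with $t_2=\tfrac{s-r}{s+r}t_1$ gives the entries (\ref{m2-1}) and hence $\det A=\frac{(J'+J-r)(J'-J+r)(s-r)}{(J'+J+r)(J'-J-r)(s+r)}\,t_1^2$; identifying $t_1$ through the multiplicity one formula on $E'_{k-a+1,\d',j'}\wedge E_{a-1,\d,j+1}$ and comparing with (\ref{m2}) via the gamma recursion pins $t_1^2$, hence the overall constant. The upshot is that normalizing $\det A$ to equal (\ref{m2}) is the same as placing $\sqrt{(s-r)/(s+r)}$ on the $d,d$ type and $\sqrt{(s+r)/(s-r)}$ on the $\d,\d$ type, which is precisely the assertion of Theorem \ref{thm1}.

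The step I expect to be the main obstacle is the multiplicity two bookkeeping: carrying the Lemma \ref{consts} constants $\lambda_1,\lambda_2,r_1,r_2$, the Hodge eigenvalues $l_1,l_2,m_1,m_2$, the matching constant $b$, and the Bochner jumps $N_c,N_1,N_2$ through $\det B^\pm$ and through the interface system (3)--(6) without a sign or an index slip, and in particular verifying that $C$ truly reduces to the single constant $-1$ in every direction; once that is secured, the rest is the routine gamma-function rearrangement exhibited above. A smaller point worth confirming is that the graph of $K$-types on each Hodge summand is connected, which is what allows one to pass from edgewise quotients to a genuinely global constant.
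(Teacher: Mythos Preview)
Your sketch is correct and follows essentially the same route as the paper: the paper's argument is precisely the sequence of compressed intertwining relations, the $\det B^+/\det B^-$ computation on multiplicity two types, and the interface system (3)--(6) that you describe, with the same gamma-function repackaging and the same final normalization via $t_1^2$. One small slip: the off-diagonal sign factors in $B^\pm$ are $(-1)^{k-a-1}$ and $(-1)^{k-a}$, so they combine to $-1$, not $+1$; together with the minus in the $2\times2$ determinant this still gives your formula $\det B^\pm=(\tfrac12 N_c\pm r)^2+C$, so nothing downstream changes.
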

%%%%%%%%%%%%%%%%%%%%%%%%%%%%%%%%%%%%%%%%%%%%%%%%%%%%%%%%%%%%%%%%%%%%%%
\section{Conformally invariant differential operators of even orders}
%%%%%%%%%%%%%%%%%%%%%%%%%%%%%%%%%%%%%%%%%%%%%%%%%%%%%%%%%%%%%%%%%%%%%%
Let $R$ be the scalar curvature and $R_{\alpha\beta}$ the Ricci tensor of a pseudo-Riemannian manifold $(M,g)$ of dimension $n\ge 3$. Consider the conformally covariant differential operator of order 2 on $k$-forms (\cite{Branson:87})
\begin{equation*}
D_{2,k}=(s+1)\d d+(s-1)d\d+(s+1)(s-1)(\widetilde{R}-2V\#),
\end{equation*}
where
\begin{align*}
&s=(n-2k)/2,\quad \widetilde{R}=R/2(n-1),\quad V_{\alpha\beta}=(R_{\alpha\beta}
-\widetilde{R}g_{\alpha\beta})/(n-2),\text{ and}\\
&(V\#\varphi)_{a_1 a_2\cdots a_k}=\sum_{i=1}^{k} V^\alpha{}_{a_i}\varphi_{a_1\cdots a_{i-1}\alpha a_{i+1}\cdots a_k} \text{ for }\varphi\in \Lambda^k M . 
\end{align*}

In the case of $(S^{p-1} \times S^{q-1}, -g_{{}_{S^{p-1}}}+g_{{}_{S^{q-1}}})$, 
\begin{align*}
&R_{\alpha\beta}=\left[\begin{array}{cc}(p-2)g_{{}_{S^{p-1}}}&0\\
0&(q-2)g_{{}_{S^{q-1}}}\end{array}\right],\\
&R=-(p-1)(p-2)+(q-1)(q-2),\text{ and}\\
&\widetilde{R}=\frac{q-p}{2},\quad \text{and} \quad V_{\alpha\beta}=\frac{1}{2}\left[\begin{array}{cc}g_{{}_{S^{p-1}}}&0\\
0&g_{{}_{S^{q-1}}}\end{array}\right] .
\end{align*}
So for $\Phi'\wedge\Phi\in \Lambda^{k-1}S^{p-1}\wedge\Lambda^a S^{q-1}$,
\begin{equation*}
V\#(\Phi'\wedge\Phi)=\frac{1}{2}\{-(k-a)+a\}\cdot\Phi'\wedge\Phi .
\end{equation*}
$D_{2,k}$ is
\begin{align*}
&(s-1)(d'\d'\Phi'\wedge\Phi+\Phi'\wedge d\d \Phi)+(s+1)(s-1)\left(\frac{q-p}{2}+k-2a\right)\Phi'\wedge\Phi\\
&\qquad\text{on}\quad E'_{k-a,d',j'}\wedge E_{a,d,j},\\
&(s+1)(\d'd'\Phi'\wedge\Phi+\Phi'\wedge \d d \Phi)+(s+1)(s-1)\left(\frac{q-p}{2}+k-2a\right)\Phi'\wedge\Phi\\
&\qquad\text{on}\quad E'_{k-a,\d',j'}\wedge E_{a,\d,j}.
\end{align*}
So $D_{2,k}$ acts by a constant
\begin{align*}
&(s-1)(J+J')(J-J')\quad \text{on}\quad E'_{k-a,d',j'}\wedge E_{a,d,j}\quad \text{and}\\
&(s+1)(J+J')(J-J')\quad \text{on}\quad E'_{k-a,\d',j'}\wedge E_{a,\d,j}.
\end{align*}
On $\left[\begin{array}{c}E'_{k-a,\d',j'}\wedge E_{a,d,j}\\ E'_{k-a+1,d',j'}\wedge E_{a-1,\d,j}\end{array}\right]$, $D_{2,k}$ can be viewed as a $2\times 2$ matrix whose entries are
\begin{align*}
&(1,1):\, (s+1)\d'd'+(s-1)d\d+(s+1)(s-1)\{(q-p)/2+k-2a\}, \\
&(1,2):\, (-1)^{k-a+1}\d'd, \\
&(2,1):\, (-1)^{k-a+1}d'\d, \\
&(2,2):\, (s-1)d'\d'+(s+1)\d d+(s+1)(s-1)\{(q-p)/2+k-2(a-1)\} .
\end{align*}
Thus $D_{2,k}=\sqrt{(s+1)(s-1)}\cdot A$, where $A$ is the intertwinor in the Theorem \ref{thm1} with $r=1$.

Now we want to construct differential intertwinors of all even orders. Let us consider the following operators.
\begin{equation*}
\begin{array}{ll}

A'=\sqrt{-\d'd'+\{(p-2)/2-(k-a)\}^2},& A=\sqrt{\d d+\{(q-2)/2-a\}^2}\\
B'=\sqrt{-d'\d'+\{(p-2)/2-(k-a)\}^2},& B=\sqrt{d\d+\{(q-2)/2-(a-2)\}^2}.
\end{array}
\end{equation*}
$A'$ (resp. $B'$) acts by the constant $J'$ on $E'_{k-a,\d',j'}$ (resp. $E'_{k-a+1,d',j'}$) and $A$ (resp. $B$) acts by the constant $J$ on $E_{a,\d,j}$ (resp. $E_{a-1,d,j}$). And let
\begin{align*}
&C'=\begin{cases}\sqrt{-\d'd'+\{(p-2)/2-(k-a)\}^2}&\mbox{ on }E'_{k-a,\d',j'},\\
\sqrt{-d'\d'+\{(p-2)/2-(k-a)\}^2}&\mbox{ on }E'_{k-a+1,d',j'},\end{cases} \\
&C=\begin{cases}\sqrt{d\d+\{(q-2)/2-(a-1)\}^2} &\mbox{ on }E_{a,d,j},\\
\sqrt{\d d+\{(q-2)/2-(a-1)\}^2} &\mbox{ on }E_{a-1,\d,j}.\end{cases}
\end{align*}
Then $C'$ (resp. $C$) acts by the constant $J'$ (resp. $J$) on the types they are defined.
\begin{theorem}
Let $\mathscr{B}$ be the $2\times 2$ matrix on $\left[\begin{array}{c}E'_{k-a,\d',j'}\wedge E_{a,d,j}\\ E'_{k-a+1,d',j'}\wedge E_{a-1,\d,j}\end{array}\right]$ defined as follows.
\begin{equation*}
\begin{split}
&(1,1):\quad (s+r)\d'd'+(s-r)d\d+(s+r)(s-r)\{(q-p)/2+k-2a+1-r\}, \\
&(1,2):\quad (-1)^{k-a+1}\cdot 2r\cdot\d'd, \\
&(2,1):\quad (-1)^{k-a+1}\cdot 2r\cdot d'\d, \\
&(2,2):\quad (s-r)d'\d'+(s+r)\d d+(s+r)(s-r)\{(q-p)/2+k-2a+1+r\} .
\end{split}
\end{equation*}
Then the operator $D_{2r,k}$ defined by,
\begin{align*}
&\text{for }\underline{r\text{ odd,}}\\
&(s+r)(A'+A)(A'-A)\prod_{l=1}^{(r-1)/2}(A'+A+2l)(A'+A-2l)\\
&\quad\cdot (A'-A+2l)(A'-A-2l)\quad \text{on }E'_{k-a,\d',j'}\wedge E_{a,\d,j},\\
&(s-r)(B'+B)(B'-B)\prod_{l=1}^{(r-1)/2}(B'+B+2l)(B'+B-2l)\\
&\quad\cdot (B'-B+2l)(B'-B-2l)\quad \text{on }E'_{k-a+1,d',j'}\wedge E_{a-1,d,j},\\
&(-1)\Bigg\{\prod_{l=1}^{(r-1)/2}(C'+C+(2l-1))(C'+C-(2l-1))(C'-C+(2l-1))\\
&\cdot (C'-C-(2l-1))\Bigg\}\cdot\mathscr{B}
\quad \text{on }\left[\begin{array}{l}E'_{k-a,\d',j'}\wedge E_{a,d,j}\\
E'_{k-a+1,d',j'}\wedge E_{a-1,\d,j}\end{array}\right] 
\end{align*}
\begin{align*}
&\text{and }\text{ for }\underline{r\text{ even,}}\\
&(s+r)\prod_{l=1}^{r/2}(A'+A+(2l-1))(A'+A-(2l-1))\\
&\quad\cdot (A'-A+(2l-1))(A'-A-(2l-1))\quad \text{on }E'_{k-a,\d',j'}\wedge E_{a,\d,j},\\
&(s-r)\prod_{l=1}^{r/2}(B'+B+(2l-1))(B'+B-(2l-1))\\
&\quad\cdot (B'-B+(2l-1))(B'-B-(2l-1))\quad \text{on }E'_{k-a+1,d',j'}\wedge E_{a-1,d,j},\\
&(-1)(C'+C)(C'-C)\Bigg\{\prod_{l=1}^{(r-2)/2}(C'+C+2l)(C'+C-2l)(C'-C+2l)\\
&\cdot (C'-C-2l)\Bigg\}\cdot\mathscr{B}
\quad \text{on }\left[\begin{array}{l}E'_{k-a,\d',j'}\wedge E_{a,d,j}\\
E'_{k-a+1,d',j'}\wedge E_{a-1,\d,j}\end{array}\right] 
\end{align*}
is a conformally covariant differential operator with leading term $(-1)^r$ times
\begin{equation*}
(s+r)(\d^{(n)}d^{(n)})^r+(s-r)(d^{(n)}\d^{(n)})^r ,
\end{equation*}
where $d^{(n)}$ is the exterior derivative and $\d^{(n)}$ is the coderivative in $S^{p-1} \times S^{q-1}$ with metric signature $(p-1,q-1)$.
\end{theorem}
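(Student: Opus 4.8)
The plan is to verify that the differential operator $D_{2r,k}$ constructed from the square-root operators $A',A,B',B,C',C$ has exactly the same spectrum as the intertwinor $A$ of Theorem~\ref{thm1} (up to an overall scalar), hence is conformally covariant by the fact that an operator commuting with the $K$-action and agreeing spectrally with an intertwinor must itself be intertwining. First I would record the scalar actions: on $E'_{k-a,\d',j'}\wedge E_{a,\d,j}$ the operator $A'$ acts by $J'=j'+(p-2)/2$ and $A$ acts by $J=j+(q-2)/2$ (from the Hodge Laplacian eigenvalues on spheres combined with the shift constants), and similarly $B',B$ on $E'_{k-a+1,d',j'}\wedge E_{a-1,d,j}$ and $C',C$ on the respective summands of the multiplicity $2$ block; these identities are exactly the Riemannian $a$-form $\d d$- and $d\d$-spectra quoted in the excerpt (the constants $\mu\nu$, $\alpha\beta$, $m_1$, $m_2$, $l_1$, $l_2$ already appearing above). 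Then on the multiplicity $1$ types the product $(s\pm r)\prod_l(J'+J\pm\cdots)(J'-J\pm\cdots)$ telescopes against the $\Gamma$-ratio in Theorem~\ref{thm1}: one uses $\Gamma(z+1)=z\Gamma(z)$ to write the quotient of the normalized eigenvalue of $A$ on $E'_{k-a,\d',j'}\wedge E_{a,\d,j}$ to its value on any neighbor in the projection diagram as a finite product of linear factors in $J',J,r$, and checks that this agrees with the ratio of the proposed $D_{2r,k}$-eigenvalues; matching the two normalizations (the $\sqrt{(s\mp r)/(s\pm r)}$ factors versus the $(s+r)$, $(s-r)$ prefactors) fixes the overall constant.

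Next I would handle the multiplicity $2$ block. Here $D_{2r,k}$ is the scalar polynomial in $C',C$ times the matrix $\mathscr B$, and $\mathscr B$ is precisely the matrix $B^-$ (equivalently $B^+$) of Section~2 with $N_c$ replaced by its value and with the entries expressed through $m_1,m_2$ exactly as in the $A_{ij}$ formulas of (\ref{m2-1}); in fact comparing $\mathscr B$ entrywise with (\ref{m2-1}) shows $\mathscr B = t^{-1} A$ up to the constant $t$, so $\det\mathscr B$ reproduces (after the telescoping product of $(C'+C)$, $(C'-C)$ factors) the four-$\Gamma$ determinant of Theorem~\ref{thm1}. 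One must also check that $\mathscr B$ has polynomial (hence differential) entries — it does, since $\d'd'$, $d\d$, $\d'd$, $d'\d$ are differential and the scalar multiples involve only $s,r,p,q,k,a$ — and that multiplying by the polynomial in $C',C$ keeps it differential, which is clear because $C'^2$ and $C^2$ are the second-order operators $-\d'd'+\text{const}$ etc., so only \emph{even} powers of $C',C$ occur in the product once it is expanded; the parity bookkeeping (the split into $r$ odd with half-integer shifts $2l-1$ on the multiplicity $2$ block versus $r$ even with integer shifts $2l$, and vice versa on the multiplicity $1$ blocks) is exactly what guarantees this cancellation of square roots, and I would verify it by pairing each factor $(C'+C+c)$ with $(C'+C-c)$ and each $(C'-C+c)$ with $(C'-C-c)$ so their product $(C'+C)^2-c^2 = C'^2+C^2+2C'C-c^2$ still contains the odd cross-term $2C'C$, which is cancelled only when combined across the $(C'-C)$ pair — so one actually pairs $(C'+C+c)(C'+C-c)(C'-C+c)(C'-C-c) = ((C'+C)^2-c^2)((C'-C)^2-c^2)$, a polynomial in $C'^2,C^2$.

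The leading-symbol computation is the last step: the top-order part of $A'$ is $\sqrt{-\d'd'}$, of $A$ is $\sqrt{\d d}$, etc., and expanding the products one finds the highest-order terms come from choosing the ``$\pm$'' combination that produces $(A'\pm A)^{2}$-type squares at top order, collapsing to $\pm(\d'd')^{\cdots}$ and $\pm(\d d)^{\cdots}$; on the multiplicity $2$ block $\mathscr B$ contributes the order-$2$ top term $(s+r)\d'd'+(s-r)d\d$ in the $(1,1)$ slot and the analogous one in $(2,2)$, and multiplication by the degree-$(2r-2)$ scalar polynomial in $C',C$ (whose leading term is $(C'^2+C^2)^{r-1}=(-\d'd'+\d d)^{r-1}$ up to sign) promotes this to the stated leading term $(-1)^r\{(s+r)(\d^{(n)}d^{(n)})^r+(s-r)(d^{(n)}\d^{(n)})^r\}$ after identifying that on the product manifold with the pseudo-Riemannian signature $\d^{(n)}d^{(n)}$ restricted to each Hodge-type summand is $\d'd'$ or $\d d$ with the sign dictated by the $-g_{S^{p-1}}$. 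The main obstacle I expect is precisely this last bookkeeping: reconciling the $S^{p-1}$-factor sign (the metric $-g_{S^{p-1}}$, which is why $A'=\sqrt{-\d'd'+\cdots}$ carries the minus sign) with the Riemannian eigenvalue conventions used for Lemma~\ref{consts}, and tracking the $(-1)^{k-a+1}$ signs and the factors $m_1 m_2$, $l_1 l_2$, $b^{\pm1}$ through the conjugation $A\mapsto A'$ so that the off-diagonal entries of $\mathscr B$ come out with the stated signs; everything else is a mechanical $\Gamma$-function telescoping that I would present compactly rather than term by term.
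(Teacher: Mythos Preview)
Your approach to conformal covariance---matching the spectral data of $D_{2r,k}$ against the intertwinor of Theorem~\ref{thm1}---is exactly how the paper disposes of that part (in one line, citing (\ref{m2-1}) and Theorem~\ref{thm1}), and your pairing observation $(C'+C+c)(C'+C-c)(C'-C+c)(C'-C-c)=(C'^2+C^2-c^2)^2-4C'^2C^2$ correctly shows that on each block $D_{2r,k}$ is a polynomial in the second-order operators $A'^2,A^2$ (resp.\ $B'^2,B^2$, resp.\ $C'^2,C^2$).

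The gap is that this does \emph{not} show $D_{2r,k}$ is a differential operator on $\Lambda^k(S^{p-1}\times S^{q-1})$. You obtain, on each Hodge-type summand separately, a differential expression, but these expressions are genuinely different: for instance $A^2=\d d+h_1$ with $h_1=((q-2)/2-a)^2$ on $E_{a,\d,j}$, while $C^2=d\d+h_3$ with $h_3=((q-2)/2-(a-1))^2\neq h_1$ on $E_{a,d,j}$. Since the Hodge projections onto $\cR(d)$ and $\cR(\d)$ are pseudodifferential (not differential), prescribing distinct differential formulas on the Hodge pieces does not by itself yield a global differential operator. The paper's proof is almost entirely devoted to filling this gap: it exhibits $D_{2r,k}=D_1+D_2-D_3$, where each $D_i$ is given by a \emph{single} polynomial in $\d'd',d'\d',\d d,d\d$ (and the off-diagonals $\d'd,d'\d$ for $D_2=\mathscr C\cdot\mathscr B$) on the full four-type block; it then checks that $D_1$ restricts to $D_{2r,k}$ on the two multiplicity-$1$ types while collapsing to a fixed polynomial $\mathscr P_1$ (resp.\ $\mathscr P_2$) on the multiplicity-$2$ rows, that $D_2$ does the reverse, and that $D_3=\left(\begin{smallmatrix}\mathscr P_1&0\\0&\mathscr P_2\end{smallmatrix}\right)$ removes the overlap. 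This gluing---and in particular the verification that the two routes to $\mathscr P_1,\mathscr P_2$ agree---is the substantive computation you are missing. The leading-term identification in the paper is likewise carried out through $D_1,D_2,D_3$ (with an induction for the multiplicity-$2$ block), not through the square-root operators directly; incidentally, in your sketch the top term of the scalar prefactor is $(C'^2-C^2)^{r-1}$, not $(C'^2+C^2)^{r-1}$.
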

\begin{proof}
Conformal covariance is clear from (\ref{m2-1}) and Theorem \ref{thm1}. Now, we shall show that $D_{2r,k}=D_1+D_2-D_3$ for some differential operators $D_1$, $D_2$, and $D_3$. On 
$\left[\begin{array}{c}
E'_{k-a,\d',j'}\wedge E_{a,\d,j}\oplus E'_{k-a,\d',j'}\wedge E_{a,d,j}\\
E'_{k-a+1,d',j'}\wedge E_{a-1,\d,j}\oplus E'_{k-a+1,d',j'}\wedge E_{a-1,d,j}
\end{array}\right]$, 
let $D_1$ be the $2\times 2$ matrix defined by 
\begin{align*}
&\text{on (1,1) : }\\
&(s+r)(-\d'd'-\d d+h_0-h_1)\\
&\cdot\prod_{l=1}^{(r-1)/2}\Big((-\d'd'+h_0)^2-2(\d d+h_1+(2l)^2)(-\d'd'+h_0)\\
&\qquad+(\d d+h_1-(2l)^2)^2\Big)\quad\text{for }\underline{r \text{ odd}},\\
&(s+r)\prod_{l=1}^{r/2}\Big((-\d'd'+h_0)^2-2(\d d+h_1+(2l-1)^2)(-\d'd'+h_0)\\
&\qquad +(\d d+h_1-(2l-1)^2)^2\Big)\quad\text{for }\underline{r \text{ even}},\\
&\text{where }h_0=((p-2)/2-(k-a))^2\text{ and } h_1=((q-2)/2-a)^2,
\end{align*}
\begin{align*}
&\text{on (2,2) : }\\
&(s-r)(-d'\d'-d \d+h_0-h_2)\\
&\cdot\prod_{l=1}^{(r-1)/2}\Big((-d'\d'+h_0)^2-2(d \d+h_2+(2l)^2)(-d'\d'+h_0)\\
&\qquad +(d \d+h_2-(2l)^2)^2\Big)\quad\text{for }\underline{r \text{ odd}},\\
&(s-r)\prod_{l=1}^{r/2}\Big((-d'\d'+h_0)^2-2(d \d+h_2+(2l-1)^2)(-d'\d'+h_0)\\
&\qquad +(d \d+h_2-(2l-1)^2)^2\Big)\quad\text{for }\underline{r \text{ even}},\\
&\text{where }h_2=((q-2)/2-(a-2))^2 . 
\end{align*}
To define $D_2$, we define $\mathscr{C}$ first to be 0 on (1,2), (2,1), and by 
\begin{align*}
&\text{on (1,1) : }\\
&(-1)\prod_{l=1}^{(r-1)/2}\Big((-\d'd'+h_0)^2-2(d\d+h_3+(2l-1)^2)(-\d'd'+h_0)\\
&\qquad +(d\d+h_3-(2l-1)^2)^2\Big)\quad\text{for }\underline{r \text{ odd}},\\
&(-1)(-\d'd'-d\d+h_0-h_3)\\
&\cdot\prod_{l=1}^{(r-2)/2}\Big((-\d'd'+h_0)^2-2(d\d+h_3+(2l)^2)(-\d'd'+h_0)\\
&\qquad +(d\d+h_3-(2l)^2)^2\Big)\quad\text{for }\underline{r \text{ even}},
\end{align*}
\begin{align*}
&\text{on (2,2) : }\\
&(-1)\prod_{l=1}^{(r-1)/2}\Big((-d'\d'+h_0)^2-2(\d d+h_3+(2l-1)^2)(-d'\d'+h_0)\\
&\qquad+(\d d+h_3-(2l-1)^2)^2\Big)\quad\text{for }\underline{r \text{ odd}},\\
&(-1)(-d'\d'-\d d+h_0-h_3)\\
&\cdot\prod_{l=1}^{(r-2)/2}\Big((-d'\d'+h_0)^2-2(\d d+h_3+(2l)^2)(-d'\d'+h_0)\\
&\qquad+(\d d+h_3-(2l)^2)^2\Big)\quad\text{for }\underline{r \text{ even}},\\
&\text{where }h_3=((q-2)/2-(a-1))^2.
\end{align*}
Now define $D_2:=\mathscr{C}\cdot\mathscr{B}$.
Clearly, 
\begin{align*}
&D_1=D_{2r,k} \text{ on }E'_{k-a,\d',j'}\wedge E_{a,\d,j}\oplus E'_{k-a+1,d',j'}\wedge E_{a-1,d,j}\text{ and}\\
&D_2=D_{2r,k} \text{ on }E'_{k-a,\d',j'}\wedge E_{a,d,j}\oplus E'_{k-a+1,d',j'}\wedge E_{a-1,\d,j} .
\end{align*}
Note that on $E'_{k-a,\d',j'}\wedge E_{a,d,j}$, $D_1$ acts as the polynomial $\mathscr{P}_1$ in $\d'd'$ of order $r$ with leading coefficient $(-1)^r(s+r)$ and roots
\begin{align*}
&h_0-(h_1\pm2l)^2=h_0-((q-2)/2-a\pm2l)^2,\\
&\qquad l=0,1,\dots,(r-1)/2 \quad \text{for }r\text{ odd},\\
&h_0-((q-2)/2-a\pm(2l-1))^2,\quad l=1,2,\dots,r/2 \quad \text{for }r\text{ even}.
\end{align*}
On $E'_{k-a+1,d',j'}\wedge E_{a-1,\d,j}$,on the other hand, $D_1$ acts as the polynomial $\mathscr{P}_2$ in $d'\d'$ of order $r$ with leading coefficient $(-1)^r(s-r)$ and roots
\begin{align*}
&h_0-((q-2)/2-(a-2)\pm2l)^2,\quad l=0,1,\dots,(r-1)/2 \quad \text{for }r\text{ odd},\\
&h_0-((q-2)/2-(a-2)\pm(2l-1))^2,\quad l=1,2,\dots,r/2 \quad \text{for }r\text{ even}.
\end{align*}
Note also that 
\begin{equation*}
D_2=\begin{cases}\mathscr{P}_1 & \text{ on }E'_{k-a,\d',j'}\wedge E_{a,\d,j},\\
\mathscr{P}_2 & \text{ on }E'_{k-a+1,d',j'}\wedge E_{a-1,\d,j} .\end{cases}
\end{equation*}
Thus, $D_{2r,k}=D_1+D_2-D_3$, where $D_3=\left(\begin{array}{cc}\mathscr{P}_1&0\\
0&\mathscr{P}_2\end{array}\right)$.

For the statement about the leading term of $D_{2r,k}$, note first that 
\begin{equation*}
(s+r)(\d^{(n)}d^{(n)})^r+(s-r)(d^{(n)}\d^{(n)})^r \text{ is}
\end{equation*}
\begin{align*}
&(s+r)(\d'd'+\d d)^r \text{ on }E'_{k-a,\d',j'}\wedge E_{a,\d,j},\\
&(s-r)(d'\d'+d\d)^r \text{ on }E'_{k-a+1,d',j'}\wedge E_{a-1,d,j},\text{ and}\\
&(s+r)\left(\begin{array}{cc}\d'd'&(-1)^{k-a+1}\d'd\\(-1)^{k-a+1}d'\d&\d d\end{array}\right)^r\\
&\qquad+
(s-r)\left(\begin{array}{cc}d\d&(-1)^{k-a+1}\d'd\\(-1)^{k-a+1}d'\d&d'\d'\end{array}\right)^r \\
&\qquad\text{ on }\left[\begin{array}{c}E'_{k-a,\d',j'}\wedge E_{a,d,j}\\
E'_{k-a+1,d',j'}\wedge E_{a-1,\d,j}\end{array}\right] .
\end{align*}
On the other hand, the leading term of $D_1$ is
\begin{align*}
&(-1)^r(s+r)(\d'd'+\d d)^r \text{ on }E'_{k-a,\d',j'}\wedge E_{a,\d,j},\\
&(-1)^r(s+r)(\d'd')^r \text{ on }E'_{k-a,\d',j'}\wedge E_{a,d,j},\\
&(-1)^r(s-r)(d'\d')^r \text{ on }E'_{k-a+1,d',j'}\wedge E_{a-1,\d,j},\\
&(-1)^r(s-r)(d'\d'+d\d)^r \text{ on }E'_{k-a+1,d',j'}\wedge E_{a-1,d,j},\\
\end{align*}
the leading term of $D_2$ is
\begin{align*}
&(-1)^r(s+r)(\d'd')^r \text{ on }E'_{k-a,\d',j'}\wedge E_{a,\d,j},\\
&(-1)^r(s-r)(d'\d')^r \text{ on }E'_{k-a+1,d',j'}\wedge E_{a-1,d,j},\\
&(-1)^r\left(\begin{array}{cc}\d'd+d\d&0\\0&d'\d'+\d d\end{array}\right)^{r-1}\\
&\cdot\left(\begin{array}{cc}(s+r)\d'd'+(s-r)d\d&(-1)^{k-a+1}2r\d'd\\
(-1)^{k-a+1}2rd'\d&(s-r)d'\d'+(s+r)\d d\end{array}\right)\\
&\quad \text{ on }\left[\begin{array}{c}E'_{k-a,\d',j'}\wedge E_{a,d,j}\\
E'_{k-a+1,d',j'}\wedge E_{a-1,\d,j}\end{array}\right],
\end{align*}
and the leading term of $D_3$ is
\begin{align*}
&(-1)^r(s+r)(\d'd')^r \text{ on }E'_{k-a,\d',j'}\wedge E_{a,\d,j}\oplus E'_{k-a,\d',j'}\wedge E_{a,d,j},\\
&(-1)^r(s-r)(d'\d')^r \text{ on }E'_{k-a+1,d',j'}\wedge E_{a-1,\d,j}\oplus E'_{k-a+1,d',j'}\wedge E_{a-1,d,j}.
\end{align*}
Since $(-1)^r$ times the leading term of $(s+r)(\d^{(n)}d^{(n)})^r+(s-r)(d^{(n)}\d^{(n)})^r$ is the same as that of $D_{2r,k}$ on $\left[\begin{array}{c}E'_{k-a,\d',j'}\wedge E_{a,d,j}\\
E'_{k-a+1,d',j'}\wedge E_{a-1,\d,j}\end{array}\right]$ by induction, the statement about the leading term follows.
\end{proof}
\begin{remark}
It can be readily checked that in the case $S^1\times S^{n-1}$ ($p=2,q=n$), the operators in the theorem agree with Branson's operators in \cite{Branson:87} (p.256). 
Also, if $k=0$, these operators on functions agree with the operators obtained earlier in \cite{Hong:11}.

\end{remark}

%%%%%%%%%%%%%%%%%%%%%%%%%%%%%%%%%%%%%%%%%%%%%%%%%%%%%%%%%%%%%%%%%%%%%%%%%%
%%%%%%%%%%%%%%%%%%%%%%%%%%%%%%%%%%%%%%%%%%%%%%%%%%%%%%%%%%%%%%%%%%%%%%%%%%
%%%%%%%%%%%%%%%%%%%%%%%%%%%%%%%%%%%%%%%%%%%%%%%%%%%%%%%%%%%%%%%%%%%%%%%%%%
\newpage

\vspace{1cm}
\noindent Doojin Hong\\
Department of Mathematics\\
University of North Dakota\\
Grand Forks, ND 58202, USA\\
Email: doojin.hong@und.edu
\end{document}